% An algebraic approach to manifold-valued generalized functions
%
% A. Burtscher
%
% Version: 27/May/2011

\documentclass[10pt,reqno,a4paper,draft]{amsart}
\usepackage{amssymb}
\usepackage{a4wide}
\usepackage{hyperref}

% for diagrams
\usepackage{amscd,mathrsfs}
\input xy
\xyoption{all}

%%% MATH ABBREVIATIONS %%%
\newcommand{\Cc}{\mathcal{C}}
\newcommand{\Co}{\mathcal{C}_0}
\newcommand{\Cinf}{\mathcal{C}^{\infty}}
\newcommand{\A}{\mathcal{A}}
\newcommand{\G}{\mathcal{G}}
\newcommand{\Gld}{\mathcal{G}_{\rm{ld}}}
\newcommand{\Xct}{\widetilde{X}_c}
\newcommand{\Yct}{\widetilde{Y}_c}
\newcommand{\K}{\mathbb{K}}
\newcommand{\Kt}{\widetilde{\mathbb{K}}}
\newcommand{\EM}{\mathcal{E}_M}
\newcommand{\Neg}{\mathcal{N}}
\newcommand{\eps}{\varepsilon}
\newcommand{\R}{\mathbb{R}}
\newcommand{\C}{\mathbb{C}}
\newcommand{\N}{\mathbb{N}}

\newcommand{\Der}{\operatorname{Der}}
\newcommand{\pr}{\operatorname{pr}}

% enumeration style changed

\title[An algebraic approach to manifold-valued generalized functions]{An algebraic approach\\to manifold-valued generalized functions}
\date{May 27, 2011} % First Version: Oct 18, 2010

\numberwithin{equation}{section}

\theoremstyle{plain}
\newtheorem{theorem}{Theorem}[section]

\newtheorem{proposition}[theorem]{Proposition}
\newtheorem{lemma}[theorem]{Lemma}
 
\theoremstyle{definition}
\newtheorem{definition}[theorem]{Definition}

\theoremstyle{remark}
\newtheorem{remark}[theorem]{Remark}

\begin{document}

\author{Annegret Burtscher}
\address{Faculty of Mathematics, University of Vienna\\
Nordbergstr.~15, 1090 Vienna, Austria}
\email{\href{mailto:annegret.burtscher@univie.ac.at}{annegret.burtscher@univie.ac.at}}
\thanks{This work was supported by project P20525 of the Austrian Science Fund (FWF) and research stipend FS 506/2010 from the University of Vienna.}

\subjclass[2010]{46F30, 46E25, 46T30, 54C40}
\keywords{nonlinear generalized functions, special {C}olombeau algebras, algebra homomorphisms, smooth functions, diffeomorphisms}

\begin{abstract}
We discuss the nature of structure-preserving maps of varies function algebras. In particular, we identify isomorphisms between special Colom\-beau algebras on manifolds with invertible manifold-valued generalized functions in the case of smooth parametrization. As a consequence, and to underline the consistency and validity of this approach, we see that this generalized version on algebra isomorphisms in turn implies the classical result on algebras of smooth functions.
\end{abstract}
\maketitle
\thispagestyle{empty}

\section{Introduction}
\label{intro}

 In order to study problems in geometric analysis on manifolds, the set of manifold-valued generalized functions with domain manifold $X$ and target manifold $Y$, denoted by $\G[X,Y]$, has been introduced by M.~Kunzinger in \cite{Kunzinger2002c,Kunzinger2002b}. Many useful properties have now been investigated, e.g., intrinsic characterizations are provided in \cite{Kunzinger2003} and it is known that $\G[X,Y]$ is a sheaf and contains an embedded copy of the space of continuous mappings $\Cc(X,Y)$~\cite{Kunzinger2009}.

 In addition, we may view a manifold-valued generalized function $\varphi \in \G[Y,X]$ as an algebra homomorphism $\varPhi$ (defined by the pullback under $\varphi$) between the special Colombeau algebras $\G(X)$ and $\G(Y)$ on $X$ and $Y$, respectively. This algebraic approach is indeed fruitful since algebra homomorphisms $\varPhi$ from $\G(X)$ to $\G(Y)$ in turn uniquely define manifold-valued generalized functions. Bijective manifold-valued generalized functions from $Y$ to $X$, in particular, can be identified with algebra isomorphisms between the corresponding Colombeau algebras on $X$ and $Y$ \cite{Vernaeve2010,Burtscher2009}.

 It is the aim of this paper to elaborate the ideas for the correspondence between algebra morphisms $\G(X) \to \G(Y)$ and manifold-valued generalized functions $\G[Y,X]$ with smooth parameter dependence developed in the author's thesis~\cite{Burtscher2009}, and to point out similarities and differences to the case of generalized functions with arbitrary parametrization~\cite{Vernaeve2010}. Moreover, to come full circle, we will see that such a characterization of isomorphisms on algebras of generalized functions in fact implies the standard result on algebras of smooth functions.

\section{Background}
\label{sec:1}

 Starting with a prototypical example in Banach algebra theory we shall underline the importance of representing topological spaces by algebras. Suppose we are given a locally compact Hausdorff space $X$, then the set of all continuous real- or complex-valued functions on $X$ that vanish at infinity, henceforth denoted by $\Co(X)$, forms a commutative $C^\ast$-algebra under pointwise addition and multiplication. The spectrum $\widehat{\A}$ of a commutative Banach algebra $\A$ is the set of all non-zero multiplicative linear functionals. In our case the spectrum $\widehat{\Co(X)}$ is homeomorphic to $X$ which implies the subsequent result, see e.g.~\cite[Thm.~1.3.14]{Higson2000}.

\begin{theorem} \label{thm:cont}
 Let $X$ and $Y$ be locally compact Hausdorff spaces. The algebras $\Co(X)$ and $\Co(Y)$ are algebraically isomorphic if and only if $X$ and $Y$ are homeomorphic. Such an isomorphism is in fact an isometry.
\end{theorem}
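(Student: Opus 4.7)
The strategy is classical Gelfand duality: identify $X$ with the spectrum $\widehat{\Co(X)}$ via the evaluation map $x \mapsto \operatorname{ev}_x$, where $\operatorname{ev}_x(f) = f(x)$, and exploit functoriality. As the excerpt already records that $\widehat{\Co(X)} \cong X$ (and similarly for $Y$), the bulk of the work is to assemble this into the theorem statement; the only new piece is the isometry claim.

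The ``if'' direction is immediate: a homeomorphism $h \colon X \to Y$ induces the pullback map $h^\ast \colon \Co(Y) \to \Co(X)$, $f \mapsto f \circ h$, which is plainly an algebra isomorphism, and is an isometry in the sup-norm because $h$ is a bijection.

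For the ``only if'' direction, let $\varPhi \colon \Co(X) \to \Co(Y)$ be an algebra isomorphism. Composition with $\varPhi$ sends a non-zero multiplicative linear functional $\chi \in \widehat{\Co(Y)}$ to $\chi \circ \varPhi \in \widehat{\Co(X)}$, giving a bijection $\widehat{\varPhi} \colon \widehat{\Co(Y)} \to \widehat{\Co(X)}$. Transporting this through the two identifications yields a bijection $h \colon Y \to X$, and since the topology on each spectrum is the relative weak-$\ast$ topology, $\widehat{\varPhi}$ is automatically a homeomorphism; hence so is $h$.

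For the final isometry assertion, I would argue spectrally. For $f \in \Co(X)$ one has $\sigma(f) \setminus \{0\} = f(X) \setminus \{0\}$ (working in the unitisation if $X$ is non-compact), so the spectral radius $r(f) = \sup\{|\lambda| : \lambda \in \sigma(f)\}$ equals $\|f\|_\infty$. Since the spectrum of an element is purely algebraic, any algebra isomorphism preserves spectra, hence spectral radii, hence sup-norms. The main subtlety to keep track of here is the non-unital case: one must either adjoin a unit before extracting spectra, or argue directly that algebra isomorphisms between $\Co(X)$ and $\Co(Y)$ extend uniquely to their unitisations. Once that bookkeeping is done, the isometry statement drops out, and the theorem is complete.
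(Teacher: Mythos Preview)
The paper does not actually prove this theorem; it is quoted as background and referenced to \cite[Thm.~1.3.14]{Higson2000}, with the one-line justification that the spectrum $\widehat{\Co(X)}$ is homeomorphic to $X$. Your proposal is correct and is precisely the standard Gelfand-duality argument that this reference unpacks: identify points with characters, transport the algebra isomorphism to a homeomorphism of spectra, and recover the isometry from the spectral-radius formula; so your approach agrees with what the paper invokes.
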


 In particular, for $X$ and $Y$ compact, we deduce that $\Cc(X)$ and $\Cc(Y)$ are algebraically isomorphic if and only if $X$ and $Y$ are homeomorphic.

 A similar result holds true for algebras of smooth functions $\Cinf(X)$. The standard proofs of this make use of `Milnor's exercise'~\cite[p.~11]{Milnor1974} and thereby of the identification of multiplicative functionals in the algebras of smooth functions with points on the manifold, see e.g.~\cite[Suppl.~4.2C]{Abraham1988}. In 2003 A.~Weinstein pointed out that such proofs strongly rely on the the fact that the manifolds are assumed to be second countable, and formulated a theorem that was finally proved independently by J.~Mr\v{c}un and J.~Grabowski~\cite{Mrcun2005,Grabowski2005} by purely algebraic approaches.

\begin{theorem} \label{thm:smooth}
 Let $X$ and $Y$ be any Hausdorff smooth manifolds (not necessarily second countable, paracompact or connected). Then any algebra isomorphism $\Cinf(X) \rightarrow \Cinf(Y)$ is given by composition with a unique diffeomorphism $Y \rightarrow X$.
\end{theorem}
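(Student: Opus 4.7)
The plan is to deduce Theorem~\ref{thm:smooth} from the characterization of algebra isomorphisms $\G(X) \to \G(Y)$ that constitutes the main result of this paper, rather than reproving it by the character-theoretic techniques of \cite{Mrcun2005,Grabowski2005}. Given an algebra isomorphism $\Phi \colon \Cinf(X) \to \Cinf(Y)$, the goal is to produce a diffeomorphism $\bar\varphi \colon Y \to X$ with $\Phi(f) = f \circ \bar\varphi$ for every $f \in \Cinf(X)$; uniqueness of $\bar\varphi$ follows at once from this identity.

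The first step is to lift $\Phi$ to an algebra isomorphism $\tilde\Phi \colon \G(X) \to \G(Y)$ by applying $\Phi$ componentwise to representing nets, $\tilde\Phi[(u_\eps)_\eps] := [(\Phi(u_\eps))_\eps]$. Provided this prescription descends to the quotients, $\tilde\Phi$ is an algebra isomorphism whose inverse is the analogous lift of $\Phi^{-1}$. By the main characterization result of the paper, $\tilde\Phi$ then equals $\varphi^*$ for a unique invertible $\varphi \in \G[Y,X]$ with smooth parameter dependence.

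To recover a classical diffeomorphism from $\varphi$, we exploit that $\tilde\Phi$ agrees with $\Phi$ on the canonical embedding $\Cinf(X) \hookrightarrow \G(X)$, so that $[(f \circ \varphi_\eps)_\eps] = \Phi(f)$ as a class in $\G(Y)$ for every $f \in \Cinf(X)$. Testing this identity against coordinate functions in local charts of $X$ forces the representing family $\varphi_\eps$ to be $\eps$-independent modulo negligibility, producing a classical smooth map $\bar\varphi \colon Y \to X$ with $\Phi(f) = f \circ \bar\varphi$. Invertibility of $\varphi$ in $\G[Y,X]$ then translates into $\bar\varphi$ being a diffeomorphism.

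The main obstacle lies in the very first step, namely verifying that the componentwise lift is well-defined: one must show that $\Phi$ maps the space $\EM(X)$ of moderate nets into $\EM(Y)$ and the ideal $\Neg(X)$ of negligible nets into $\Neg(Y)$. In the second-countable setting this could be extracted from automatic continuity of $\Phi$ with respect to the Fr\'echet topology on $\Cinf$, but in full generality no such topology is available and one must argue purely algebraically. A reasonable route is to combine the multiplicative and additive structure of $\Phi$ with its interaction with suitable cut-off functions in order to transfer the uniform-in-$\eps$ seminorm bounds defining $\EM$ and $\Neg$ from $X$ to $Y$. The interplay between the abstract algebraic hypothesis on $\Phi$ and the analytic growth conditions of the Colombeau framework is where the bulk of the work is concentrated.
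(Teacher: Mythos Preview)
Your overall strategy matches the paper's Section~\ref{sec:4}: lift $\varPsi$ componentwise to $\G$, invoke Theorem~\ref{thm:gen}, then descend to a classical diffeomorphism. However, there are two genuine gaps between your proposal and the stated theorem.

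First, the paper does \emph{not} prove Theorem~\ref{thm:smooth} in the generality stated; it is quoted as a result of Mr\v{c}un and Grabowski. What the paper actually derives via the Colombeau route (Theorem after Lemma~\ref{coliso}) is only the \emph{compact} case. Your proposal targets arbitrary Hausdorff manifolds, but the tool you invoke, Theorem~\ref{thm:gen}, explicitly requires $X$ and $Y$ to be second countable (see the Remark following it: second countability is ``crucial'' for identifying $\Xct$ with multiplicative functionals). So even if the lift were well-defined, the characterization step is unavailable in the generality of Theorem~\ref{thm:smooth}.

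Second, the well-definedness of the lift is not handled in the paper by any ``purely algebraic'' argument of the kind you sketch. The paper proves continuity of $\varPsi$ (Theorem~\ref{cont}) via the spectral radius identity $r(f)=\Vert f\Vert_\infty$, which needs $X$ compact so that $f(X)$ is bounded; this continuity is then used in Lemma~\ref{coliso} to push $\EM$ and $\Neg$ forward. Your suggestion to transfer the growth bounds using cut-offs and multiplicativity is not a concrete argument, and the paper's closing Remark explicitly flags the non-compact extension as open (``feasible but more involved''). In short, your plan reproduces the paper's compact-manifold argument in outline but overstates its reach: neither the lift nor the invocation of Theorem~\ref{thm:gen} is available without additional hypotheses, and the full Theorem~\ref{thm:smooth} is not obtained this way in the paper.
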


 In order to formulate a similar result for generalized functions, such as distributions, additional constructions are necessary. Due to L.~Schwartz's impossibility result~\cite{Schwartz1954} distributions can not be multiplied in a way that is consistent with the classical pointwise multiplication of continuous functions without dropping desirable algebraic properties. The theory of generalized functions, initiated by J.F.~Colombeau~\cite{Colombeau1984,Colombeau1985}, resolves this problem of non-multiplicativity by embedding the space of distributions in an associative and commutative differential algebra, while preserving the pointwise multiplication of smooth functions. These so-called Colombeau algebras have been applied to many problems, primarily for non-linear partial differential equations, as well as the study of non-smooth differential geometry. Structure preserving maps between paracompact manifolds in Colombeau theory are the so-called compactly bounded (c-bounded) generalized functions~\cite{Kunzinger2002c,Kunzinger2003,Kunzinger2009}. Recently H.~Vernaeve~\cite{Vernaeve2010} established a correspondence analogous to the above theorems between manifold-valued generalized functions and the algebra homomorphisms of Colombeau algebras.

\begin{theorem} \label{thm:vern}
 Let $X$ and $Y$ be second countable Hausdorff manifolds.
\begin{enumerate}
\item An algebra morphism $\G(X) \rightarrow \G(Y)$ is, up to multiplication by idempotents $e \in \G(X)$, uniquely determined by a locally defined c-bounded generalized function $\Gld[Y,X]$.
\item Every algebra isomorphism $\G(X) \rightarrow \G(Y)$ is given by composition with an invertible locally defined c-bounded generalized function $\Gld[Y,X]$.
\end{enumerate}
\end{theorem}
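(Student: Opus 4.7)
The plan is to mimic the classical Gelfand/Milnor reconstruction: recover the underlying map from the algebra morphism by evaluating it on a rich family of generators, using generalized-point evaluation to play the role that multiplicative functionals play in the continuous and smooth settings. The easy direction is immediate, since pullback under any $\varphi \in \Gld[Y,X]$ yields an algebra morphism $\varphi^\ast \colon \G(X) \to \G(Y)$, and pullback under an invertible such $\varphi$ is manifestly an isomorphism with inverse $(\varphi^{-1})^\ast$. The substantial content is the converse: reconstructing $\varphi$ from a given morphism $\Phi$.

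For part~(i), I would first invoke the point-value characterization of $\G$: each $\chi \in \Yct$ induces a character $\delta_\chi \colon \G(Y) \to \Kt$, and $\delta_\chi \circ \Phi$ is a character of $\G(X)$. The technical heart is to show that, modulo the Boolean algebra of idempotents of $\G(X)$, such a character is represented by evaluation at a generalized point of $\Xct$, and that this assignment depends on $\chi$ via a locally defined c-bounded generalized function. Concretely, I would fix a locally finite cover of $X$ by relatively compact coordinate neighborhoods $(U_\alpha,\psi_\alpha)$ with subordinate cut-offs $\rho_\alpha \in \Cinf(X)$, apply $\Phi$ to the embedded coordinate functions $\rho_\alpha \cdot \psi_\alpha^i$, and read off local coordinate representatives of $\varphi$ on the pieces of $Y$ cut out by the idempotents $\Phi(\rho_\alpha^k)$. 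Patching these local expressions via the Boolean structure delivers a candidate $\varphi \in \Gld[Y,X]$ with $\varphi^\ast = \Phi$, uniquely determined up to the idempotent ambiguity recorded in the statement.

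For part~(ii), the isomorphism hypothesis rigidifies the picture: $\Phi$ induces a bijection on idempotents preserving $0$ and $1$, so no non-trivial idempotent ambiguity can enter. The construction of part~(i) therefore produces a unique $\varphi \in \Gld[Y,X]$ with $\Phi = \varphi^\ast$. Applying the same procedure to $\Phi^{-1}$ yields $\psi \in \Gld[X,Y]$ satisfying $\varphi \circ \psi = \mathrm{id}$ and $\psi \circ \varphi = \mathrm{id}$ in the generalized sense, whence $\varphi$ is invertible in $\Gld[Y,X]$.

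I expect the main obstacle to be the simultaneous control of moderateness estimates, c-boundedness, and the patching combinatorics. Unlike the classical smooth case of Theorem~\ref{thm:smooth}, where connectedness rules out non-trivial idempotents, $\G(X)$ admits a vast supply of idempotents arising from $\eps$-dependent characteristic-type nets, and these interact subtly with the point-value characterization: the sharpest difficulty is that the same morphism $\Phi$ may be reconstructed from distinct local data glued along different such idempotents, which is exactly why the correspondence in part~(i) is only up to this ambiguity. Second countability of $X$ and $Y$ enters both through paracompactness, needed for the chart covers and partitions of unity, and through countable exhaustions by compacta, along which the c-boundedness estimates for the assembled $\varphi$ are verified uniformly in $\eps$.
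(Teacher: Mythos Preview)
The paper does not prove Theorem~\ref{thm:vern}; it is quoted as a result of Vernaeve~\cite{Vernaeve2010}, and no argument for it appears in the paper itself. What the paper does contain is a sketch of the underlying mechanism (Theorems~4.3, 4.5 and the paragraph preceding Theorem~\ref{thm:gen}) in the smooth-parametrization variant: for each $y\in\Yct$ one forms the multiplicative functional $\operatorname{ev}_y\circ\varPhi$ on $\G(X)$, invokes the identification of nonzero multiplicative $\Kt$-linear functionals with evaluation at compactly supported generalized points (Theorem~\ref{thm:nzmlf}), and then assembles the resulting assignment $y\mapsto x$ into an element of $\G[Y,X]$.

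Your proposal follows exactly this Gelfand/Milnor-style route, so at the level of strategy it matches what the paper (following Vernaeve) does. Your additional concrete step---pushing coordinate functions $\rho_\alpha\cdot\psi_\alpha^i$ through $\varPhi$ to read off chart representatives of $\varphi$---is compatible with this and is in fact how the abstract point-by-point assignment is made into an honest c-bounded net in practice. Your discussion of the idempotent ambiguity and the role of second countability is also in line with the paper's remarks. In short: there is nothing in the paper to compare your argument against beyond the outline just described, and your outline agrees with it.
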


 As for the classical results, the approach is based on algebraic properties of non-zero multiplicative linear functionals $\nu: \G(X) \rightarrow \Kt$, where $\Kt$ denotes the set of generalized numbers ($\mathbb{K}$ either $\mathbb{R}$ or $\mathbb{C}$). The compactly supported generalized points in $X$ are identified with the ideals $\ker(\nu) \lhd \G(X)$. These ideals $\ker(\nu)$ are, however, not maximal since $\Kt$ is not a field~\cite[Prop.~6.1.6]{Burtscher2009}.

 Another difference from the classical situation occurs in the use of locally defined c-bounded generalized functions $\Gld[Y,X]$. Due to the Whitney Embedding Theorem~\cite{Hirsch1976} we can assume, without loss of generality, that $X$ and $Y$ are submanifolds of some $\R^m$ and $\R^n$, respectively. Since generalized functions in $\G(Y)^m$ that are c-bounded into $X$ may not entirely map to $X$, they do not necessarily define an element in $\G[Y,X]$ but only in $\Gld[Y,X]$.

 Such intricacies may be avoided by restricting to Colombeau generalized functions with smooth parametrization~\cite[Rem.~2.4]{Kunzinger2009}. Theorem~\ref{thm:vern} still holds, but $\Gld[Y,X]$ may be replaced by $\G[Y,X]$. Besides, no idempotents in $e$ (other than $0$ and $1$, and combinations of both on different connected components) appear in this setting. Thus Colombeau algebras with smooth parametrization seem more `natural' and consistent with regard to geometric problems, cf.\ also~\cite{Kunzinger2009}.

\section{Preliminaries}
\label{sec:2}

 Throughout this paper, manifolds are assumed to be finite dimensional, smooth, Hausdorff and second countable. Let $I=(0,1]$ and denote by $\Cinf(A,B)$ the smooth functions from $A$ to $B$. If $B=\mathbb{K}$ ($\mathbb{K}$ being either $\R$ or $\mathbb{C}$) then we write $\Cinf(A)$.

 The \emph{special Colombeau algebra} on $X$, $\G(X)$, is defined as the quotient $\EM(X) / \Neg(X)$ of the sets of moderate and negligible functions:
\begin{align*} 
     \EM (X) := \, & \lbrace (u_{\eps})_{\eps} \in \Cinf(I \times X) \; | \; \forall K \subset \subset X \, \forall P \in \mathcal{P}(X) \, \exists N \in \N: \\
             & \sup_{x \in K} |Pu_{\eps}(x)| = O(\eps^{-N}) \text{ as } \eps \rightarrow 0 \rbrace \\
     \Neg (X) := \, & \lbrace (u_{\eps})_{\eps} \in \EM(X) \; | \; \forall K \subset \subset X \, \forall m \in \N: \\ 
             & \sup_{x \in K} |u_{\eps}(x)| = O(\eps^m) \text{ as } \eps \rightarrow 0 \rbrace ,
\end{align*}
 where $\mathcal{P}(X)$ is the set of linear differential operators on $X$. The space of distributions, $\mathcal{D}'(X)$, can be linearly embedded in $\G(X)$, and $\Cinf(X)$ is a faithful subalgebra of $\G(X)$~\cite{Grosser2001}.
 The scalars are called \emph{generalized numbers}. They form a ring, $\Kt$, defined as the quotient $\EM / \Neg$:
\begin{align*} 
   \EM & := \lbrace (r_{\eps})_{\eps} \in \Cinf(I) \; | \; \exists N \in \N: |r_{\eps}| = O(\eps^{-N}) \text{ as } \eps \rightarrow 0 \rbrace \\
   \Neg & := \lbrace (r_{\eps})_{\eps} \in \Cinf(I) \; | \; \forall m \in \N: |r_{\eps}| = O(\eps^{m}) \text{ as } \eps \rightarrow 0 \rbrace .
\end{align*}

 The space $\G[X,Y]$ of \emph{compactly bounded (c-bounded) Colombeau generalized functions} on $X$ with values in the manifold $Y$ is similarly defined by an equivalence relation on the space $\EM[X,Y]$ of moderate c-bounded maps where the c-boundedness condition~\cite{Kunzinger2002c,Kunzinger2003,Kunzinger2009} takes the form \[ \forall K \subset\subset X \, \exists L \subset\subset Y \, \exists \eps_0>0 \text{ such that } \forall \eps<\eps_0: \, u_{\eps}(K) \subseteq L. \]
 In absence of a linear structure on the target space $Y$, the equivalence relation is more involved than in the definition of $\G(X)$.

 The set of \emph{compactly supported points} in $X$ (and $Y$), $\Xct$ (and $\Yct$), allows for point value characterizations of generalized functions in $\G(X)$ (and $\G[X,Y]$)~\cite{Grosser2001,Kunzinger2003,Nigsch2006}. A net $(x_{\eps})_{\eps} \in \Cinf(I,X)$ is called compactly supported if there exists a compact set $K \subseteq X$ such that $x_{\eps} \in K$ for $\eps$ sufficiently small. Two nets $(x_{\eps})_{\eps}, (y_{\eps})_{\eps} \in \Cinf(I,X)$ are called equivalent if, for any Riemannian metric $\mathbf{g}$, the distance is $d_\mathbf{g}(x_{\eps},y_{\eps}) = O(\eps^m)$ for any $m \in \N$ as $\eps$ tends to $0$. We shall denote by $\Xct$ the set of all equivalence classes---with respect to the above equivalence relation---of compactly supported points in $X$.

\section{Isomorphisms of algebras of Colombeau generalized functions}
\label{sec:3}

 A manifold-valued generalized function $\varphi \in \G[Y,X]$ naturally defines an algebra homomorphism $\varPhi: \G(X)\rightarrow \G(Y)$ via composition, i.e.\
$$\varPhi(u) = u \circ \varphi~\text{for all}~u \in \G(X).$$
On the other hand, as mentioned above, H.~Vernaeve~\cite{Vernaeve2010} provided Theorem~\ref{thm:vern} characterizing morphisms between special Colombeau algebras (with non-smooth parametrization) with locally defined c-bounded generalized functions. As in the classical case, the idea was to identify `points' in a manifold $X$ with `algebraic objects' in the algebra $\G(X)$---and similarly for $Y$---in order to construct a `structure preserving map' between $Y$ and $X$ given an algebra homomorphism between $\G(X)$ and $\G(Y)$. By doing so, multiplicative linear functionals $\nu: \G(X) \rightarrow \Kt$ could be associated with compactly bounded generalized points $\Xct$ in $X$ \cite[Thm.~4.5]{Vernaeve2010}.

 In contrast to the classical case for algebras of continuous and smooth functions, however, the scalars in these algebras of generalized functions only form a ring and not a field. An immediate consequence of this is that the ideals $\ker(\nu)$ in $\G(X)$ are not maximal. H.~Vernaeve therefore introduced a new notion of `invertibility' and new `maximal ideals', that---with slight modifications---can also be carried over to Colombeau generalized functions with smooth parametrization as defined in Section~\ref{sec:2}.

\begin{definition} \label{def:Sinv}
 Let $S \subseteq I$ such that $0 \in \overline{S}$ and let $\A$ denote the algebra $\Kt$ or $\G(X)$. An element $u \in \A$ is called \emph{invertible with respect to $S$} if there exists $v \in \A$ and $r \in \Kt$ such that $$uv = r1 \text{ in } \A \text{ and } \left. r \right\vert_S = 1 \text{ in } \Kt.$$
\end{definition}

 The restriction $\left. r \right\vert_S = 1$ is to be understood at the level of representatives as functions of $\eps$, i.e.\ for a representative $(r_{\eps})_{\eps} \in \EM$ of $r$ and some $(n_{\eps})_{\eps} \in \Neg$ we require that $r_{\eps} = 1 + n_{\eps}$ for all $\eps \in S$. Note that this condition in the setting of \cite{Vernaeve2010}, together with $\left. r \right\vert_{S^c} = 0$, characterizes the idempotents in $\Kt$~\cite{Aragona2008}. In our definition of $\Kt$, however, there are no idempotents and this type of invertibility had to be constructed artificially. Still, the same properties hold:

\begin{proposition}
 Let $r \in \Kt$. Then $r \neq 0$ if and only if there exists $S \subseteq I$, $0 \in \overline{S}$, such that $r$ is invertible with respect to $S$.
\end{proposition}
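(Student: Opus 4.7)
The plan is to prove the two implications separately; the forward direction is essentially immediate, while the backward direction requires a smooth cut-off construction that is specific to the smooth-parametrization setting.

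\emph{Necessity.} Suppose $r$ is invertible with respect to some $S$ with $0\in\overline{S}$ and assume for contradiction that $r=0$. Then for any $v\in\Kt$ the product $rv$ vanishes in $\Kt$, so the scalar $s\in\Kt$ witnessing invertibility (which I denote $s$ to avoid the notational clash with the proposition's $r$) is itself zero. Unpacking $s|_S=1$: there is a representative $(s_\eps)_\eps$ of $s$ and $(n_\eps)_\eps\in\Neg$ with $s_\eps = 1+n_\eps$ for all $\eps\in S$. Since $s=0$, $(s_\eps)_\eps$ is itself negligible, so the constant net $(1)_\eps = (s_\eps-n_\eps)_\eps$ would be negligible when restricted to $S$; evaluating along any sequence in $S$ tending to $0$ (which exists by $0\in\overline{S}$) gives $1 = O(\eps^m)$ for every $m$, a contradiction.

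\emph{Sufficiency.} Here is the crux. Fix a representative $(r_\eps)_\eps\in\EM$ of $r$; since $r\neq 0$, $(r_\eps)_\eps\notin\Neg$, so there exist $m\in\N$ and a strictly decreasing sequence $\eps_k\searrow 0$ in $I$ with $|r_{\eps_k}|>\eps_k^m$. By continuity of $\eps\mapsto r_\eps$ there is an open interval $J_k\ni\eps_k$ on which $|r_\eps|>\eps^m/2$; shrinking, I take the $J_k$ pairwise disjoint with $\sup J_k\to 0$. Inside each $J_k$ pick a smaller open subinterval $J_k'\ni\eps_k$ and a smooth bump $\chi_k\colon I\to[0,1]$ supported in $J_k$ and identically $1$ on $J_k'$. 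Put $\chi:=\sum_k\chi_k$ (smooth, by local finiteness of the supports) and define
\[
v_\eps := \chi(\eps)/r_\eps \ \text{on} \ \textstyle\bigcup_k J_k, \qquad v_\eps := 0 \ \text{elsewhere}.
\]
Since $\chi$ vanishes on the boundary of each $J_k$, $v$ extends smoothly by $0$ to all of $I$; the bound $|v_\eps|\le 2\eps^{-m}$ (on the support of $\chi$, and trivially elsewhere) shows $(v_\eps)_\eps\in\EM$. By construction $r_\eps v_\eps=\chi(\eps)$ for every $\eps\in I$, so with $s:=[\chi]\in\Kt$ and $S:=\bigcup_k J_k'$ we have $rv = s\cdot 1$, $s|_S=1$ (with negligible part $0$), and $0\in\overline{S}$.

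\emph{Main obstacle.} The principal difficulty, compared with the non-smooth parametrization case of \cite{Vernaeve2010}, is that representatives here are required to be smooth in $\eps$; a characteristic-function cutoff on the measurable set where $r_\eps$ is polynomially large will not do. The thickening of the countable sequence $(\eps_k)$ into disjoint open intervals $J_k\supset J_k'$ together with the partition-of-unity-style sum $\chi=\sum_k\chi_k$ is precisely what delivers a smooth moderate $v$ while keeping $s$ identically $1$ on a set accumulating at $0$. No further integer-scale juggling is needed because the $\EM$-condition on $\Kt$ involves only sup-bounds on the net itself, not its derivatives.
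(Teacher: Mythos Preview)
Your proof is correct and shares the same core idea as the paper's: manufacture a smooth cutoff $\chi$ so that $v_\eps:=\chi(\eps)/r_\eps$ is a moderate $S$-inverse, exploiting that the $\EM$-condition on $\Kt$ involves only sup-bounds. The paper packages the argument slightly differently: it first isolates the intermediate characterization ``$r$ is invertible with respect to $S$ $\iff$ $r$ is strictly nonzero on $S$'' and then links this to $r\neq 0$; its bump is built \emph{globally} as a smooth Urysohn function equal to $1$ on $\{|r_\eps|\ge\eps^m\}$ and $0$ on $\{|r_\eps|\le\eps^{m+1}\}$, and it takes $S$ to be the discrete sequence $\{\eps_k\}$ itself. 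Your direct argument with a locally finite sum of bumps on thickened intervals $J_k\supset J_k'$ and $S=\bigcup_k J_k'$ is an equally valid (and perhaps more explicitly constructive) implementation of the same mechanism; neither approach buys anything the other does not.
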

\begin{proof}
 We first observe that an element $r \in \Kt$ is invertible with respect to $S$ if and only if it is strictly non-zero on $S$. This is an analogue to the standard case $S = I$, cf.\ [9, Thm.~1.2.38]. For $s$ an $S$-inverse of $r$ it immediately follows on the level of representatives from
$$\left\vert r_{\eps} \right\vert = \left\vert \frac{1 + n_{\eps}}{s_{\eps}} \right\vert \geq \frac{1}{2 \left\vert s_{\eps} \right\vert} > \eps^N$$
 for an $N \in \N$ and $\eps \in S$ sufficiently small. Let, on the contrary, $r$ be strictly non-zero on $S$, i.e.\ there exists a representative $(r_{\eps})_{\eps}$ of $r$ and $m \in \N$ such that $\left\vert r_{\eps} \right\vert > \eps^m$ for $\eps \in S$ sufficiently small. Denote by $\chi: I \rightarrow \R$ the bump function that equals $1$ on $\{ \eps : \left\vert r_{\eps} \right\vert \geq \eps^m \}$ and $0$ on $\{ \eps : \left\vert r_{\eps} \right\vert \leq \eps^{m+1} \}$. Set $s_{\eps} = 0$ on $\{ \eps : r_{\eps} = 0\}$ and $s_{\eps} = \frac{\chi(\eps)}{r_{\eps}}$ else. Then $(s_{\eps})_{\eps} \in \EM$ and $s = [(s_{\eps})_{\eps}]$ is $S$-inverse of $r$.

 In order to conclude the proposition, suppose that $(r_{\eps})_{\eps} \notin \Neg$. Then for some $M \in \N$ and $\eps_k \searrow 0$ we have that $\left\vert r_{\eps_k} \right\vert > \eps_k^M$. Thus $r$ is strictly non-zero with respect to $S = \{ \eps_k : k \in \N \}$. On the other hand, if $(r_{\eps})_{\eps} \in \Neg$, then $\left\vert r_{\eps} \right\vert = O(\eps^m)$ for all $m \in \N$ and $\eps$ sufficiently small. Hence $r$ is not strictly non-zero with respect to any $S \subseteq I$ that satisfies $0 \in \overline{S}$. 
\end{proof}

\begin{theorem}
 Let $S \subseteq I$ such that $0 \in \overline{S}$. Then $u \in \G(X)$ is invertible with respect to $S$ (in $\G(X)$) if and only if $u(x)$ is invertible with respect to $S$ for all $x \in \Xct$ (in $\Kt$).
\end{theorem}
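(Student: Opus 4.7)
The plan is to mirror the strategy of the preceding scalar proposition, exploiting that for each $x \in \Xct$ the point evaluation $\G(X) \to \Kt$, $u \mapsto u(x)$, is a ring homomorphism. For the easy direction, if $uv = r \cdot 1$ in $\G(X)$ with $r|_S = 1$, then evaluating at any $x \in \Xct$ yields $u(x) v(x) = r$ in $\Kt$ with $r|_S = 1$, so $u(x)$ is $S$-invertible.

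For the converse, I would first upgrade the pointwise hypothesis to a uniform version on compact sets: for every $K \subset\subset X$ there exist $m_K \in \N$ and $\eps_K > 0$ with $\inf_{x \in K}|u_\eps(x)| > \eps^{m_K}$ for all $\eps \in S \cap (0,\eps_K)$. I would prove this by contradiction via the standard compactly-supported-point construction: a failure of the uniform bound on some $K$ would yield sequences $\eps_k \in S$ with $\eps_k \searrow 0$ and $x_k \in K$ satisfying $|u_{\eps_k}(x_k)| \leq \eps_k^k$, and after smoothly interpolating the $x_k$ one obtains a compactly supported net $(x_\eps)_\eps$ representing some $\tilde x \in \Xct$ at which $u(\tilde x)$ fails to be $S$-invertible by the preceding proposition, contradicting the hypothesis.

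With the uniform bound in hand, I would construct the inverse explicitly. Fix a compact exhaustion $X = \bigcup_n K_n$ with $K_n \subset K_{n+1}^{\circ}$, the associated data $(m_n,\eps_n)$, a subordinate smooth partition of unity $(\rho_n)_n$, and scalar cutoffs $\chi_n \in \Cinf(I,[0,1])$ built as in the scalar proposition so that $\chi_n = 1$ on $\{\eps : \inf_{K_n}|u_\eps| \geq \eps^{m_n}\} \supseteq S \cap (0,\eps_n)$ and $\chi_n = 0$ where $\inf_{K_n}|u_\eps| \leq \eps^{m_n+1}$. The candidate inverse
\[
 v_\eps(x) := \sum_n \chi_n(\eps) \, \rho_n(x) / u_\eps(x),
\]
with each summand set to zero on the $\eps$-set where $\chi_n$ vanishes (so no singularity arises from zeros of $u_\eps$ on $\operatorname{supp}\rho_n$), defines a moderate net representing some $v \in \G(X)$, and $u_\eps v_\eps = \sum_n \chi_n(\eps) \rho_n(x)$ by $\sum_n \rho_n = 1$ on the part of $X$ where nothing has been cut off.

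The hard part is that this sum is not $x$-independent a priori, so realizing $uv$ as $r \cdot 1$ for a single scalar $r \in \Kt$ requires arranging the cutoffs to coincide modulo $\Neg$. On $S$ near $0$ this is automatic since every $\chi_n$ equals $1$, and the remaining freedom off $S$ must be used to force each pairwise difference $\chi_n - \chi_m$ to be negligible as a generalized number, for instance by constructing the $\chi_n$ recursively so that $\chi_{n+1} - \chi_n$ vanishes to infinite order at $0$. Once this is achieved, taking $r := [(\chi_1(\eps))_\eps] \in \Kt$ gives $r|_S = 1$ and makes $u_\eps v_\eps - r_\eps$ negligible on every compact (as the sum $\sum_n \rho_n(\chi_n - r_\eps)$ is a finite convex combination of negligible scalars there), so $uv = r \cdot 1$ in $\G(X)$.
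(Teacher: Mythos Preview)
Your forward direction and the heart of your converse --- extracting a bad compactly supported point from a failure of the uniform lower bound on some compact $K$ --- coincide with the paper's proof. The paper, however, organises the converse as a pure contrapositive: it asserts without further detail (``as above'') that $S$-invertibility in $\G(X)$ is equivalent to being strictly non-zero on $S$, and then shows that if $u$ is \emph{not} strictly non-zero on $S$ one can manufacture $x\in\Xct$ (via a subsequence argument and the Special Curve Lemma) at which $u(x)$ is not strictly non-zero on $S$ either. No $S$-inverse of $u$ is ever written down; that direction of the equivalence is delegated to the analogy with the scalar case. Your uniform-bound-by-contradiction is exactly this contrapositive step, so at that point you are already finished by the paper's logic.

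Everything after that in your proposal --- the compact exhaustion, the partition of unity $(\rho_n)$, the cutoffs $\chi_n$, and the candidate $v_\eps=\sum_n\chi_n(\eps)\rho_n/u_\eps$ --- is an attempt to \emph{prove} the implication ``strictly non-zero on $S$ $\Rightarrow$ $S$-invertible in $\G(X)$'' that the paper leaves implicit, and this is where a genuine gap sits. The obstruction you isolate is real: $u_\eps v_\eps=\sum_n\chi_n(\eps)\rho_n(x)$ equals a scalar times $1$ only if all the $\chi_n$ represent the same class in $\Kt$, and your proposed recursive fix (arranging $\chi_{n+1}-\chi_n$ to be flat at $0$) is not justified as stated. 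The vanishing sets $B_n=\{\eps:\inf_{K_n}|u_\eps|\le\eps^{m_n+1}\}$ are in general neither nested nor eventually contained in one another, so a constraint $\chi_{n+1}|_{B_{n+1}}=0$ imposed at stage $n+1$ can conflict, at points of $B_{n+1}\setminus B_n$ accumulating at $0$, with values of $\chi_n$ that your forward recursion has already frozen; forcing $\chi_{n+1}-\chi_n\in\Neg$ would then require going back and modifying $\chi_n$, which a one-pass recursion cannot do. The construction can be salvaged (one must choose all the $\chi_n$ simultaneously, e.g.\ by first fixing the common class $r\in\Kt$ via a diagonal argument over the countably many disjointness conditions $S\cap B_n\cap(0,\eps_n)=\emptyset$ and only then selecting representatives $\chi_n$ of $r$ vanishing on $B_n$), but this takes more than the sentence you give it.
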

\begin{proof}
 Let $v = [(v_{\eps})_{\eps}]$ be an $S$-inverse of $u$ and $r$ as in Definition~\ref{def:Sinv}. Thus $u(x)v(x) = r$ in $\Kt$, which immediately implies the invertibility of all $u(x)$, $x \in \Xct$, with respect to $S$.

 Assume that $u = [(u_{\eps})_{\eps}]$ is not invertible with respect to $S$. As above, it can be shown that invertibility with respect to $S$ is equivalent to being strictly non-zero on $S$. Thus there exists $K \subset\subset X$ such that for each $m \in \N$ we have $\eps_m \in S$ ($\eps_m \searrow 0$) and $x_{\eps_m} \in K$ that satisfy $\left\vert u_{\eps_m} (x_{\eps_m}) \right\vert \leq \eps_m^m$. Without loss of generality we may assume that $(x_{\eps_m})_m$ converges to some $x$ in a chart $(v,V)$, and that for each $n \in \N$ the sequence $(m^n(v(x_{\eps_m}) - v(x)))_m$ is bounded. A modified version of the Special Curve Lemma~\cite[p.~18]{Kriegl1997} allows us to define a generalized point $x = [(x_{\eps})_{\eps}] \in \Xct$ such that the values $x_{\eps_m}$ remain the same (details for this construction can be found in \cite[Cor.~5.2.6 \& Thm.~6.2.12]{Burtscher2009}). Thus $u(x)$ is not strictly non-zero with respect to $S$, a contradiction. 
\end{proof}

 Given a non-zero multiplicative linear functional $\nu: \G(X) \rightarrow \Kt$, it is easy to see that $\ker(\nu) \lhd \G(X)$---although not maximal---is maximal with respect to the following property:

\begin{definition}
 An ideal $\mathcal{I} \lhd \G(X)$ is called \emph{maximal with respect to $\Kt 1$} if {$\mathcal{I} \cap \Kt 1 = \{ 0 \}$} and any other such ideal $\mathcal{J} \supseteq \mathcal{I}$ equals $\mathcal{I}$.
\end{definition}

 With these results at hand, the compactly bounded generalized points in $X$ can be identified with non-zero multiplicative linear functionals $\nu$. The following key argument can be proved analogously to \cite[Prop.~4.4 \& Thm.~4.5]{Vernaeve2010}.

\begin{theorem} \label{thm:nzmlf}
 Let $\nu: \G(X) \rightarrow \Kt$ be a non-zero multiplicative linear functional. Then there exists a unique $x \in \Xct$ such that \[\nu(u) = u(x) \quad \forall u \in \G(X).\]
\end{theorem}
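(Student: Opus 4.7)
The plan is to identify $\nu$ with evaluation at a unique point of $\Xct$, following Vernaeve's strategy~\cite{Vernaeve2010} but adapted to the absence of idempotents in the smooth-parametrization ring $\Kt$. First, since $\nu$ is nonzero and multiplicative, $\nu(1) = 1$ and $\mathcal{I} := \ker(\nu)$ is an ideal maximal with respect to $\Kt \cdot 1$: any strictly larger ideal $\mathcal{J}$ contains some $u \notin \mathcal{I}$, hence also $\nu(u) \cdot 1 = u - (u - \nu(u) \cdot 1) \in \mathcal{J}$, a nonzero scalar. Moreover, no element of $\mathcal{I}$ is invertible with respect to any $S \subseteq I$ satisfying $0 \in \overline{S}$: if $u \in \mathcal{I}$ with $uv = r \cdot 1$ and $r|_S = 1$, applying $\nu$ gives $0 = \nu(u)\nu(v) = r$, contradicting $r|_S = 1$. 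By the preceding theorem, every $u \in \mathcal{I}$ thus fails to be strictly non-zero at some compactly supported generalized point.

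The next step is to localize $\nu$ to a compact set. Exhaust $X = \bigcup_n K_n$ and choose cutoffs $\chi_n \in \Cinf_c(X)$ equal to $1$ near $K_n$; I claim $\nu(\chi_n) = 1$ for some $n$. Otherwise each $1 - \chi_n \notin \mathcal{I}$ is strictly non-zero on some $S_n$, and one assembles, via a diagonal subsequence over disjointly supported contributions, an element of $\mathcal{I}$ that is invertible with respect to a suitably chosen $S \subseteq I$, contradicting the non-invertibility observed above. With $\nu$ thus localized on some $K \subset\subset X$, invoke the Whitney embedding $X \hookrightarrow \R^m$, extend the coordinate functions $y^1,\dots,y^m$ to elements of $\G(X)$ using a cutoff supported near $K$, and set $x^i := \nu(y^i) \in \Kt$ with representatives $(x^i_\eps)_\eps$. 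The candidate point is $x := [(x_\eps)_\eps]$ with $x_\eps = (x^1_\eps,\dots,x^m_\eps) \in \R^m$. Applying $\nu$ to the defining functions of $X$ in $\R^m$ yields zero, whence $x_\eps$ lies negligibly close to $X$, and projecting within a tubular neighborhood places $x$ in $\Xct$.

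The identity $\nu(u) = u(x)$ for all $u \in \G(X)$ then follows by localizing in a chart around $x$, approximating representatives $u_\eps$ by Taylor polynomials in the coordinates $y^i$ to sufficient order (dictated by the moderateness exponent), applying $\nu$ term by term using multiplicativity and linearity, and estimating the remainders as negligible at $x$. Uniqueness is immediate: if $y \in \Xct$ also satisfies $\nu(u) = u(y)$ for all $u$, then in particular each coordinate function yields $y^i(x) = y^i(y)$, so $x^i_\eps - y^i_\eps$ is negligible for every $i$, and hence $x = y$ in $\Xct$.

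The hard part will be the localization step and the verification that the coordinate construction actually produces an element of $\Xct$. Patching the data $\{1-\chi_n\}$ into a single element of $\mathcal{I}$ invertible with respect to a well-chosen $S$ requires a delicate diagonal choice at the level of representatives, and checking that $(x^i_\eps)_\eps$ stays near $X$ rather than drifting in ambient space depends on careful moderateness estimates. The modified Special Curve Lemma invoked in the proof of the previous theorem should again be instrumental in securing the status of $x$ as a bona fide element of $\Xct$.
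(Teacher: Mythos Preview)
Your proposal is correct and follows essentially the same approach as the paper: the paper does not spell out a proof but simply states that the argument proceeds \emph{analogously to} \cite[Prop.~4.4 \& Thm.~4.5]{Vernaeve2010}, and your sketch is precisely an adaptation of Vernaeve's strategy to the smooth-parametrization setting, with the appropriate caveats about the absence of idempotents and the use of $S$-invertibility as developed in the preceding results.
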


 It is evident that the final construction of the manifold-valued generalized function in \cite[Thm.~5.1]{Vernaeve2010} also holds for Colombeau generalized functions with smooth parametrization (see \cite[Thm.~6.4.1]{Burtscher2009}): Given an isomorphism $\varPhi: \G(X) \rightarrow \G(Y)$ and a point $y \in \Yct$ one obtains a unique $x \in \Xct$ that satisfies $\operatorname{ev}_y \circ \varPhi = \operatorname{ev}_x$. This identification extends to a map $\varphi \in \G[Y,X]$ with the required properties, and we obtain:

\begin{theorem} \label{thm:gen}
 Let $X$ and $Y$ be manifolds that are Hausdorff and second countable, and $\varPhi: \G(X) \rightarrow \G(Y)$.
\begin{enumerate}
 \item If $\varPhi$ is a homomorphism (with $\varPhi(1)=1$), then there exists a unique $\varphi \in \G[Y,X]$ such that \[\varPhi(u) = u \circ \varphi \quad \forall u \in \G(X).\]
 \item If $\varPhi$ is an isomorphism, then $\varphi$ as in (i) is invertible and the inverse satisfies $\varPhi^{-1}(v) = v \circ \varphi^{-1}$ for all $v \in \G(Y)$. In this case, $\dim X = \dim Y$.
\end{enumerate}
\end{theorem}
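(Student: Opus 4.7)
The plan is to mirror the approach of \cite{Vernaeve2010}, taking advantage of the simpler structure of the smooth-parametrization setting.

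For (i), I fix $y \in \Yct$ and form $\nu_y := \operatorname{ev}_y \circ \varPhi : \G(X) \to \Kt$, which is a multiplicative linear functional that is non-zero thanks to $\varPhi(1)=1$. Theorem~\ref{thm:nzmlf} produces a unique $\varphi(y) \in \Xct$ with $\nu_y(u) = u(\varphi(y))$ for every $u \in \G(X)$. To realize this pointwise map as an element of $\G[Y,X]$, I embed $X$ as a closed submanifold of some $\R^m$ via Whitney, let $u^1,\dots,u^m \in \Cinf(X)$ be (bump-extended) coordinate functions of the embedding, and pick smooth representatives $\varphi^i_\eps \in \Cinf(I \times Y)$ of $\varPhi(u^i)$. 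Composing $(\varphi^1_\eps,\dots,\varphi^m_\eps)$ with a tubular-neighbourhood retraction onto $X$ gives a candidate net $\varphi_\eps$; the pointwise identities $\varPhi(u^i)(y) = u^i(\varphi(y))$ for $y \in \Yct$, combined with a compactness and contradiction argument, force this net to be c-bounded into $X$ on compacta in $Y$, and moderateness is inherited from $\varPhi(u^i) \in \G(Y)$. The equality $\varPhi(u) = u \circ \varphi$ then follows from the point-value characterization of $\G(Y)$ since both sides evaluate to $\nu_y(u)$ at every $y \in \Yct$, and uniqueness reduces to the analogous point-value characterization of $\G[Y,X]$.

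For (ii), I apply (i) to $\varPhi^{-1} : \G(Y) \to \G(X)$ to obtain $\psi \in \G[X,Y]$ with $\varPhi^{-1}(v) = v \circ \psi$ for every $v \in \G(Y)$. For $u \in \G(X)$ one then has $u = \varPhi^{-1}(\varPhi(u)) = \varPhi(u) \circ \psi = u \circ (\varphi \circ \psi)$, and applying the uniqueness part of (i) to the identity morphism on $\G(X)$ yields $\varphi \circ \psi = \operatorname{id}_X$ in $\G[X,X]$; symmetrically $\psi \circ \varphi = \operatorname{id}_Y$. For the dimension statement, I work in local charts around a classical point $y_0 \in Y$ and its image $\varphi(y_0) \in X$, differentiate the composition identity at the level of representatives, and observe that the chain rule yields $D\psi_\eps(\varphi_\eps(y_0)) \cdot D\varphi_\eps(y_0) = I_{\dim Y}$ and $D\varphi_\eps(y_0) \cdot D\psi_\eps(\varphi_\eps(y_0)) = I_{\dim X}$ up to negligible error; a rank inequality then forces $\dim X = \dim Y$.

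The main obstacle I anticipate is the construction in part (i) of a genuine representative in $\EM[Y,X]$: the componentwise net $(\varphi^1_\eps,\dots,\varphi^m_\eps)$ lands in $X$ only up to negligible error, and promoting it to a c-bounded smooth net $\varphi_\eps \in \Cinf(I \times Y, X)$ requires the tubular retraction together with uniform control on compacta while preserving smooth dependence on $\eps$. Precisely here the smooth-parametrization setting pays off: since $\Kt$ contains no non-trivial idempotents, no cut-off gluing across disjoint regions of $I$ is needed, and the construction delivers an element of $\G[Y,X]$ rather than merely of $\Gld[Y,X]$ as in \cite{Vernaeve2010}.
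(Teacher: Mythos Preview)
Your proposal is correct and follows essentially the same approach as the paper, which does not give a self-contained proof but rather points to the construction in \cite[Thm.~5.1]{Vernaeve2010} and \cite[Thm.~6.4.1]{Burtscher2009}: form $\operatorname{ev}_y \circ \varPhi$, invoke Theorem~\ref{thm:nzmlf} to get the pointwise assignment, and then assemble an honest element of $\G[Y,X]$ via a Whitney embedding---exactly as you outline. Your additional detail on the tubular-retraction step and on the dimension equality via the chain rule is in line with the cited constructions; just be careful with the order of composition in the derivative identities (you want $D\varphi_\eps(\psi_\eps(x_0))\cdot D\psi_\eps(x_0)\approx I_{\dim X}$ and the symmetric identity at $y_0$).
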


\begin{remark}
 Second countability is crucial in the identification of compactly bounded generalized points $x \in \Xct$ with non-zero multiplicative linear functionals $\psi : \G(X) \rightarrow \Kt$ in Theorem~\ref{thm:nzmlf}, since countable exhaustions by compact sets are used in the argument. Most of the preceding results, however, only require paracompact manifolds.
\end{remark}

\begin{remark} \label{rem:smooth}
 Uniqueness of $\varphi$ is already evident by restricting to the subalgebra $\Cinf(X)$, cf.~\cite[Prop.~3.3]{Kunzinger2003}.
\end{remark}

\section{Isomorphisms of algebras of smooth functions}
\label{sec:4}

 Given an isomorphism $\varPsi: \Cinf(X) \rightarrow \Cinf(Y)$ solely between algebras of smooth functions, our aim is to obtain Theorem~\ref{thm:smooth} via Theorem~\ref{thm:gen}. Remark~\ref{rem:smooth} suggests that `lifting' $\varPsi$ to an isomorphism $\varPhi: \G(X) \rightarrow \G(Y)$ is sensible. The definition of $\varPhi$ is straightforward, namely $\varPhi ([(u_{\eps})_{\eps}]) = [(\varPsi(u_{\eps}))_{\eps}]$ for $u = [(u_{\eps})_{\eps}] \in \G(X)$.
\[ \xymatrix{\hspace{1.1cm} \iota(\Cinf(X)) \subseteq \G(X) \ar@{.>}[rrr]^{\varPhi} &&& \G(Y) \supseteq  \iota(\Cinf(Y)) \hspace{1.1cm} \ar @{>-->} [d]^{\cong} \\
 \Cinf(X) \ar@{^(-->}[u]^{\cong} \ar[rrr]^{\varPsi} &&& \Cinf(Y)} \]
It is, however, not obvious whether this process leads to well-defined generalized functions in $Y$. We will show this for $X$ and $Y$ compact (and Hausdorff) manifolds, starting with a general observation on the continuity of such algebra isomorphisms.

\begin{lemma} \label{invclosed}
 Let $X$ and $Y$ be manifolds. $\Cinf(X)$ is holomorphically closed in the $C^*$-algebra $\Cc(X)$, i.e.\ any $f \in \Cinf(X)$ is invertible in $\Cc(X)$ if and only if it is invertible in $\Cinf(X)$.
\end{lemma}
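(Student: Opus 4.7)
The plan is to prove both implications by elementary means; the lemma reduces to the familiar fact that a nowhere-vanishing smooth function has a smooth reciprocal.

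First I would dispatch the easy direction. Suppose $f \in \Cinf(X)$ is invertible in $\Cinf(X)$, with smooth inverse $g$. Since $\Cinf(X) \subseteq \Cc(X)$ as subalgebras of the ring of functions on $X$, and $fg = 1$ pointwise, the same $g$ witnesses invertibility of $f$ in $\Cc(X)$.

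For the converse, assume $f \in \Cinf(X)$ is invertible in $\Cc(X)$, i.e.\ there exists $g \in \Cc(X)$ with $fg = 1$ pointwise on $X$. Evaluating at any point gives $f(x) g(x) = 1$, so $f$ is nowhere zero on $X$. Consequently the map $t \mapsto 1/t$, which is smooth on $\R \setminus \{0\}$, can be composed with $f$ to produce a smooth function $h := 1/f \in \Cinf(X)$ satisfying $f h = 1$. By uniqueness of multiplicative inverses in the commutative unital ring $\Cc(X)$, we have $g = h \in \Cinf(X)$, proving $f$ is invertible in $\Cinf(X)$.

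There is essentially no obstacle here: the entire content is the observation that smoothness of the reciprocal follows from smoothness of $t \mapsto 1/t$ away from the origin combined with the non-vanishing of $f$ forced by its continuous invertibility. No topological hypothesis on $X$ beyond being a smooth manifold is needed, which is consistent with the statement of the lemma.
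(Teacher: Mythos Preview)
Your proof is correct; the argument that a continuously invertible smooth function is nowhere zero and hence has a smooth reciprocal is exactly the standard reasoning. The paper itself states this lemma without proof, treating it as an elementary fact, so your write-up simply fills in the routine details that the paper omits.
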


\begin{definition}
 Let $\A$ be a unital Banach algebra over $\C$. For $a \in \A$ the \emph{spectrum of $a$} is \[ \sigma(a) = \{ \lambda \in \C : a - \lambda 1 \text{ is not invertible in } \A \} \] and the \emph{resolvent set of $a$} is $ \rho(a) = \C \setminus \sigma(a).$ The \emph{spectral radius of $a$} is denoted by $r(a) = \max \{ | \lambda | : \lambda \in \sigma(a) \}$.
\end{definition}

\begin{theorem} \label{cont}
 Let $X$ and $Y$ be compact manifolds and let $\varPsi: \Cinf(X) \rightarrow \Cinf(Y)$ be an algebra isomorphism. Then $\varPsi$ is a homeomorphism with respect to the natural topologies on $\Cinf(X)$ and $\Cinf(Y)$ (i.e., uniform convergence in all derivatives).
\end{theorem}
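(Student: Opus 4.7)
The plan is to prove the theorem in two stages: first, I would use Lemma~\ref{invclosed} and the spectral invariance under algebra isomorphisms to show that $\varPsi$ is an isometry with respect to the supremum norm; second, I would use the closed graph theorem for Fréchet spaces to upgrade this to continuity with respect to the full Fréchet topology of uniform convergence of all derivatives. By running the same argument on $\varPsi^{-1}$, which is also an algebra isomorphism, I obtain that $\varPsi$ is a homeomorphism.

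For the first stage, I would observe that for $f \in \Cinf(X)$ and $\lambda \in \K$, the element $f - \lambda 1$ fails to be invertible in $\Cc(X)$ precisely when $\lambda \in f(X)$, and by Lemma~\ref{invclosed} the same is true in $\Cinf(X)$. Hence $\sigma_{\Cinf(X)}(f) = f(X)$, and the supremum of absolute values on the spectrum equals $\sup_{x \in X}|f(x)|$. Since $\varPsi$ is an algebra isomorphism it preserves invertibility, so $\sigma_{\Cinf(Y)}(\varPsi(f)) = \sigma_{\Cinf(X)}(f)$; applying the same spectral identification on $Y$ yields $(\varPsi(f))(Y) = f(X)$ as subsets of $\K$. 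In particular $\sup_{y\in Y}|\varPsi(f)(y)| = \sup_{x \in X}|f(x)|$, so $\varPsi$ is a sup-norm isometry.

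For the second stage, compactness of $X$ and $Y$ makes $\Cinf(X)$ and $\Cinf(Y)$ into Fréchet spaces whose topology visibly dominates the sup norm. To invoke the closed graph theorem on $\varPsi$, I would take a sequence $f_n \to f$ in $\Cinf(X)$ with $\varPsi(f_n) \to g$ in $\Cinf(Y)$; then $f_n \to f$ and $\varPsi(f_n) \to g$ hold in particular uniformly, and the sup-norm isometry forces $\varPsi(f_n) \to \varPsi(f)$ uniformly as well. Uniqueness of uniform limits gives $g = \varPsi(f)$, so the graph of $\varPsi$ is closed, and the closed graph theorem for Fréchet spaces yields continuity of $\varPsi$. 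Applying the same argument to $\varPsi^{-1}$ then completes the proof.

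The main obstacle I anticipate is the spectral identification in the first stage: one has to phrase Lemma~\ref{invclosed} applied to $f - \lambda 1$ so that $\sigma_{\Cinf(X)}(f) = f(X)$ follows uniformly for both scalar fields $\K = \R$ and $\K = \C$, since the Banach-algebra spectral radius formula is unavailable because $\Cinf(X)$ is not a Banach algebra. Once the sup-norm isometry has been established, the Fréchet-topological upgrade is essentially automatic, since Fréchet convergence of smooth functions on a compact manifold trivially implies uniform convergence.
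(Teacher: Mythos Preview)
Your argument is correct. The first stage --- identifying $\sigma_{\Cinf(X)}(f)$ with $f(X)$ via Lemma~\ref{invclosed} and concluding that $\varPsi$ is a $\sup$-norm isometry --- is exactly what the paper does. The second stage, however, diverges: you invoke the closed graph theorem for Fr\'echet spaces, using only that the $\Cinf$-topology dominates the $\sup$-norm together with the isometry from stage one. The paper instead works directly with the generating seminorms $p_{D_1,\dots,D_k}(f)=\Vert D_1\cdots D_k f\Vert_\infty$ indexed by derivations, shows that the pullback $\varPsi^\ast(D)=\varPsi^{-1}\circ D\circ\varPsi$ of a derivation is again a derivation, and obtains the explicit identity $p_{D_1,\dots,D_k}(\varPsi(f))=p_{\varPsi^\ast(D_1),\dots,\varPsi^\ast(D_k)}(f)$. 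Your route is softer and arguably quicker; the paper's route is more explicit and in fact yields a seminorm-by-seminorm equality (not merely a continuity estimate), which the paper then reuses verbatim in the proof of Lemma~\ref{coliso} to push moderateness and negligibility through $\varPsi$. Bare continuity as you obtain it would also suffice there, but the quoted equality makes that step a one-liner.
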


\begin{proof}
 Let $f \in \Cinf(X) \subseteq \Cc(X)$. The spectrum of $f$ in $\Cc(X)$ is $\sigma(f) = f(X)$. Thus $r(f) = \Vert f \Vert_{\infty}$.

 The algebra isomorphism $\varPsi$ preserves the spectrum and resolvent set of $f$, and hence also the $\sup$-norm:
\begin{equation} \label{eq:norm}
 \Vert f \Vert_{\infty} = r(f) = r(\varPsi(f)) = \Vert \varPsi(f) \Vert_{\infty} \quad \forall f \in \Cinf(X).
\end{equation}

 We consider the semi-norms
\[ p_{D_1,...,D_k}(f) = \Vert D_1 ... D_k f \Vert_{\infty}, \] where $D_1,...,D_k \in \Der(\Cinf(X))$ are derivations of the algebra $\Cinf(X)$.

 For $D \in \Der(\Cinf(Y))$ we denote by $\varPsi^*(D)$ the pullback of $D$ under $\varPsi$, i.e. $$\varPsi^*(D)(f) = \varPsi^{-1}(D(\varPsi(f))) \quad \forall f \in \Cinf(X).$$ We need to show that $\varPsi^*(D)$ is again a derivation. Clearly, $\varPsi^*(D)$ is $\R$-linear. Moreover, let $f,g \in \Cinf(X)$. Then
$\varPsi(\varPsi^*(D)(fg)) = D(\varPsi(fg)) = D(\varPsi(f)\varPsi(g)) = \varPsi(f)D(\varPsi(g)) + D(\varPsi(f))\varPsi(g)$, and since $\varPsi$ is bijective we have that $\varPsi^*(D)(fg) = f \varPsi^*(D)(g) + \varPsi^*(D)(f) g$.

 Moreover, $\Vert D(\varPsi(f)) \Vert_{\infty} = \Vert \varPsi(\varPsi^*(D)(f)) \Vert_{\infty} \stackrel{\text{\eqref{eq:norm}}}{=} \Vert \varPsi^*(D)(f) \Vert_{\infty}$. Iterating this procedure we find that
\begin{equation} \label{eq:sn}
 p_{D_1,...,D_k}(\varPsi(f)) = p_{\varPsi^*(D_1),...,\varPsi^*(D_k)}(f) \quad \forall f \in \Cinf(X).
\end{equation}
 Thus $\varPsi$ is continuous, and we can argue analogously for $\varPsi^{-1}$. 
\end{proof}

 Using this result we can derive the characterization of algebra isomorphisms $\varPsi: \Cinf(X) \rightarrow \Cinf(Y)$ from that of algebra isomorphisms $\varPhi: \G(X) \rightarrow \G(Y)$.

\begin{lemma} \label{coliso}
 Let $X$ and $Y$ be compact manifolds and $\varPsi: \Cinf(X) \rightarrow \Cinf(Y)$ an algebra isomorphism. Then $\varPhi: \G(X) \rightarrow \G(Y)$ defined by $\varPhi([(u_{\eps})_{\eps}]) = [(\varPsi(u_{\eps}))_{\eps}]$ is a well-defined algebra isomorphism.
\end{lemma}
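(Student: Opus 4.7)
The plan is to verify, in turn, three things: (a) that $(\varPsi(u_\eps))_\eps$ lies in $\Cinf(I \times Y)$ whenever $(u_\eps)_\eps \in \Cinf(I \times X)$, so that the formula for $\varPhi$ is even meaningful; (b) that moderateness and negligibility are preserved, so $\varPhi$ descends to a well-defined map $\G(X) \to \G(Y)$; and (c) that the resulting map is an algebra isomorphism. The estimates in (b) will come almost verbatim from \eqref{eq:norm} and \eqref{eq:sn}, and (c) is essentially formal; the only genuinely non-mechanical step is (a), which demands a brief detour through infinite-dimensional calculus.

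For (a), compactness of $X$ makes $\Cinf(X)$ a Fréchet space with the semi-norms $p_{D_1,\ldots,D_k}$, and the exponential law identifies $\Cinf(I \times X)$ with $\Cinf(I, \Cinf(X))$, so $\eps \mapsto u_\eps$ is a smooth curve into $\Cinf(X)$. By Theorem~\ref{cont}, $\varPsi$ is continuous and linear between the Fréchet spaces $\Cinf(X)$ and $\Cinf(Y)$; every such map is $C^\infty$ in the Fréchet sense (it is its own derivative), so $\eps \mapsto \varPsi(u_\eps)$ is a smooth curve in $\Cinf(Y)$. Applying the exponential law on the $Y$-side recovers a smooth function $I \times Y \to \K$, i.e.\ $(\varPsi(u_\eps))_\eps \in \Cinf(I \times Y)$.

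For (b), moderateness on compact $Y$ reduces to bounds $\|P\varPsi(u_\eps)\|_\infty = O(\eps^{-N})$ for each $P \in \mathcal{P}(Y)$. Writing $P$ as a finite sum of terms $g \cdot D_1 \circ \cdots \circ D_m$ with $g \in \Cinf(Y)$ and $D_j \in \Der(\Cinf(Y))$, relation~\eqref{eq:sn} gives
\[ \|D_1 \cdots D_m \varPsi(u_\eps)\|_\infty = \|\varPsi^*(D_1) \cdots \varPsi^*(D_m) u_\eps\|_\infty = O(\eps^{-N}), \]
since the right-hand operator is a differential operator on $X$ and $(u_\eps)_\eps$ is moderate; bounded multiplication by $g$ and finite summation preserve the estimate. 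For negligibility, \eqref{eq:norm} yields $\|\varPsi(u_\eps)\|_\infty = \|u_\eps\|_\infty = O(\eps^m)$ for every $m$, so $\Neg(X)$ maps into $\Neg(Y)$ and $\varPhi$ is well defined on the quotient.

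Finally, $\varPhi$ is an algebra homomorphism because $\varPsi$ is, and the operations in $\G$ are carried out pointwise in $\eps$. Theorem~\ref{cont} applies equally to $\varPsi^{-1}$, so the same construction lifts the latter to a map $\G(Y) \to \G(X)$ which inverts $\varPhi$ at the level of representatives, and hence also on the quotient. The hard part is really just (a); once joint smoothness is in hand, everything else is book-keeping via \eqref{eq:norm} and \eqref{eq:sn}.
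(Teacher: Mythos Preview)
Your proof is correct and follows essentially the same route as the paper: both use the continuity of $\varPsi$ from Theorem~\ref{cont} together with identities \eqref{eq:norm} and \eqref{eq:sn} to transport moderateness and negligibility, and then lift $\varPsi^{-1}$ in the same way to obtain the inverse. The only difference is that you justify the joint smoothness in step~(a) explicitly via the exponential law for Fr\'echet spaces, whereas the paper asserts $\varPhi(\Cinf(I\times X))\subseteq\Cinf(I\times Y)$ without further comment.
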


\begin{proof}
 To begin with note that $\varPhi(\Cinf(I \times X,\K)) \subseteq \Cinf(I \times Y,\K)$. By the definition of $\EM$ and $\Neg$ and the continuity of $\varPsi$ by Theorem~\ref{cont}, it follows that $\varPhi(\EM(X)) \subseteq \EM(Y)$ and $\varPhi(\Neg(X)) \subseteq \Neg(Y)$: For $(u_{\eps})_{\eps} \in \EM(X)$, any $K \subset\subset Y$ and arbitrary $D_1,...,D_k \in \Der(\Cinf(Y))$ there exists $L \in \N$ such that as $\eps \rightarrow 0$
\begin{eqnarray*}
 \sup_{x \in K} \vert D_1 ... D_k \varPsi(u_{\eps})(x) \vert &\leq& p_{D_1,...,D_k}(\varPsi(u_{\eps})) \\
&\stackrel{\text{\eqref{eq:sn}}}{=}& p_{\varPsi^*(D_1),...,\varPsi^*(D_k)}(u_{\eps}) = O(\eps^{-L}).
\end{eqnarray*}
 Similarly, for $(v_{\eps})_{\eps} \in \Neg(X)$, any $K \subset\subset Y$ and any $m \in \N$, we have that $\sup_{x \in K} \vert \varPsi(v_{\eps})(x) \vert = O(\eps^m)$ as $\eps \rightarrow 0$. Therefore $\varPhi$ is well-defined.

 Moreover, $\varPhi^{-1}([(v_{\eps})_{\eps}]) = [(\varPsi^{-1}(v_{\eps}))_{\eps}]$, so $\varPhi$ is an algebra isomorphism. 
\end{proof}

\begin{theorem}
 Let $X$ and $Y$ be compact manifolds, and $\varPsi: \Cinf(X) \rightarrow \Cinf(Y)$ an algebra isomorphism. Then there exists a unique diffeomorphism $\psi: Y \rightarrow X$ such that $$\varPsi(f) = f \circ \psi \quad \forall f \in \Cinf(X).$$
\end{theorem}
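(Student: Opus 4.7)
The plan is to derive the conclusion from Theorem~\ref{thm:gen}(ii) via the lift provided by Lemma~\ref{coliso}. First, Lemma~\ref{coliso} supplies an algebra isomorphism $\varPhi: \G(X) \to \G(Y)$ with $\varPhi \circ \iota = \iota \circ \varPsi$ on $\Cinf(X)$, and Theorem~\ref{thm:gen}(ii) then produces an invertible $\varphi \in \G[Y,X]$ such that $\varPhi(u) = u \circ \varphi$ for all $u \in \G(X)$. Specialising to $u = \iota f$ with $f \in \Cinf(X)$ and evaluating at a standard point $y \in Y \subseteq \Yct$ yields
\[
 [(f(x_\eps))_\eps] = \varPsi(f)(y) \in \R \subseteq \Kt,
\]
where $(x_\eps)_\eps$ is any representative of $\varphi(y) \in \Xct$. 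In particular $f(x_\eps) \to \varPsi(f)(y)$ in $\R$ for every $f \in \Cinf(X)$.

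The central step is to extract from this a classical map $\psi: Y \to X$. Since $X$ is compact, the net $(x_\eps)_\eps$ admits an accumulation point $x \in X$, and any two accumulation points $x,x'$ satisfy $f(x) = f(x') = \varPsi(f)(y)$ for all $f \in \Cinf(X)$. Because smooth functions separate points on a Hausdorff smooth manifold, $x = x'$, so in fact $x_\eps \to x$, and we may set $\psi(y) := x$. By construction $\varPsi(f) = f \circ \psi$ on $Y$ for every $f \in \Cinf(X)$. I expect this passage from a generalized point of $\Xct$ to a classical point of $X$ to be the only substantive obstacle, and compactness of $X$ is what makes it work.

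Smoothness of $\psi$ is then immediate: $f \circ \psi = \varPsi(f) \in \Cinf(Y)$ for every $f \in \Cinf(X)$, and choosing finitely many $f_1,\ldots,f_N$ that serve as local coordinates on $X$ (e.g.\ via a Whitney embedding) forces $\psi$ to be smooth. Applying the same construction to $\varPsi^{-1}$ produces a smooth map $\psi': X \to Y$ satisfying $f \circ \psi \circ \psi' = \varPsi^{-1}(\varPsi(f)) = f$ for all $f \in \Cinf(X)$; smooth separation on $X$ therefore gives $\psi \circ \psi' = \operatorname{id}_X$, and symmetrically $\psi' \circ \psi = \operatorname{id}_Y$, so $\psi$ is a diffeomorphism. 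Uniqueness is immediate, since $f \circ \psi_1 = f \circ \psi_2$ for all $f \in \Cinf(X)$ forces $\psi_1 = \psi_2$.
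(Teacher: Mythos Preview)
Your argument is correct and follows the same overall strategy as the paper: lift $\varPsi$ to $\varPhi$ via Lemma~\ref{coliso}, invoke Theorem~\ref{thm:gen} to obtain an invertible $\varphi\in\G[Y,X]$, and then use a Whitney embedding together with compactness of $X$ to descend to a classical diffeomorphism.

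The one place where you diverge from the paper is in the passage from $\varphi$ to a classical map. You fix a standard point $y\in Y$, observe that $f(x_\eps)\to\varPsi(f)(y)$ for every $f\in\Cinf(X)$, and use compactness plus separation by smooth functions to show the representative $(x_\eps)_\eps$ of $\varphi(y)$ actually converges in $X$; this gives a pointwise-defined $\psi$ whose smoothness you then read off from $\pr_i\circ j\circ\psi=\varPsi(\pr_i\circ j)\in\Cinf(Y)$. The paper instead works globally with the generalized function $j\circ\varphi$ itself: it notes that each component $\pr_i\circ j\circ\varphi=\varPsi(\pr_i\circ j)$ is already a smooth function on $Y$, so $j\circ\varphi\in\Cinf(Y,\R^m)$, and then uses a representative with values in $j(X)$ together with closedness of $j(X)$ (from compactness of $X$) to conclude that $j\circ\varphi$ actually lands in $j(X)$; hence $\varphi=j^{-1}\circ(j\circ\varphi)\in\Cinf(Y,X)$. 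Both routes hinge on compactness of $X$ in the same essential way; yours is a touch more elementary (no reference to special representatives of elements of $\widetilde{\mathcal G}[Y,j(X)]$), while the paper's has the mild advantage of identifying the generalized $\varphi$ itself as the diffeomorphism rather than building a separate $\psi$.
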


\begin{proof}
 By Lemma~\ref{coliso} there exists an algebra isomorphism $\varPhi: \G(X) \rightarrow \G(Y)$ such that $\left. \varPhi \right\vert_{\Cinf(X)} = \varPsi$ (we omit the natural embeddings $\iota_{\_}: \Cinf(\_) \hookrightarrow \G(\_)$ throughout). By Theorem~\ref{thm:gen} there exists $\varphi \in \G[Y,X]$ such that
\begin{equation} \label{eq:psi}
 \varPhi(u) = u \circ \varphi \quad \forall u \in \G(X) \quad \text{and} \quad \varPhi^{-1}(v) = v \circ \varphi^{-1} \quad \forall v \in \G(Y).
\end{equation}
 It remains to be shown that $\varphi$, in fact, is a diffeomorphism. By the above, $f \circ \varphi \in \Cinf(Y)$ for all $f \in \Cinf(X)$. By the Whitney Embedding Theorem~\cite{Hirsch1976} there exists a smooth embedding $j: X \rightarrow \R^m$ for some $m \in \N$. 

 By \cite[Def.~2.1 \& Prop.~2.2]{Kunzinger2009} we know that $j \circ \varphi \in \widetilde{\mathcal{G}}[Y,j(X)] \subseteq \G(Y)^m$. Let $\pr_i : \R^m \rightarrow \R$ be the $i$-th projection. Since $\pr_i \circ j \in \Cinf(X)$ we have that $(\pr_i \circ j) \circ \varphi = \varPsi(\pr_i \circ j) \in \Cinf(Y)$ for all $1 \leq i \leq m$, and therefore $j \circ \varphi \in \Cinf(Y,\R^m)$. Apart from $(j \circ \varphi)_{\eps}$ itself, $\widetilde{\varphi} = j \circ \varphi \in \widetilde{\mathcal{G}}[Y,j(X)]$ also possesses a representative $(\widetilde{\varphi}_{\eps})_{\eps}$ that satisfies $\widetilde{\varphi}_{\eps}(Y) \subseteq j(X)$ for all $\eps$. Thus for all $p \in Y$ we have that $\widetilde{\varphi}_{\eps}(p) \rightarrow (j \circ \varphi)(p)$ as $\eps \rightarrow 0$. Since $j(X)$ is closed ($X$ is compact and the Whitney embedding is smooth) this implies that $(j \circ \varphi)(p) \in j(X)$ for all $p \in Y$. Summing up, $\varphi = j^{-1} \circ (j \circ \varphi) \in \Cinf(Y,X)$. By symmetry also $\varphi^{-1} \in \Cinf(X,Y)$. Thus $\varphi$ is the required $\psi$. 
\end{proof}

\begin{remark}
 Note that the above arguments heavily rely on the spectral radius formula, $r(f) = \left\Vert f \right\Vert_{\infty}$, on compact manifolds, which allows us to relate algebraic structures on one side to analytic/geometric structures on the other side as required in Theorem~\ref{cont}. A generalization to non-compact manifolds seems feasible but more involved.
\end{remark}

\section*{Acknowledgments}
 The author would like to thank Michael Kunzinger for several helpful discussions.

\end{document}